\newcommand{\hidden}[1]{}
\newcommand*{\A}{\bm{A}}
\newcommand*{\bm}[1]{{\bf #1}}
\newcommand*{\spr}{r}
\newcommand*{\x}{\bm{x}}
\newcommand*{\0}{{\bm 0}}
\newcommand*{\y}{\bm{y}}
\newcommand*{\tr}{^{\!\top}}	
\newcommand*{\B}{\bm{B}}
\newcommand*{\D}{\bm{D}}
\newcommand*{\Complex}{\mathbb{C}} 
\newcommand{\set}[1]{\left\{ \, #1 \, \right\} }
\newcommand*{\suchthat}{\colon}
\newcommand*{\I}{\bm{I}}
\newcommand{\an}[1]{\begin{align}#1\end{align}}	
\newcommand{\ab}[1]{\begin{align*}#1\end{align*}}	
\newcommand*{\p}{\bm{p}}
\newcommand*{\Pc}{\mathcal{P}}
\newcommand*{\eqdef}{\, {:=}\, }
\newcommand*{\Reals}{\mathbb{R}} 
\newcommand*{\z}{\bm{z}}
\newcommand*{\ev}{\bm{e}}
\newcommand*{\diag}[1]{\mbox{ \bf diag}\matrx{#1}}
\newcommand*{\matrx}[1]{{\left[ \stackrel{}{#1}\right]}}
\newcommand*{\uv}{\bm{u}}
\newcommand*{\vv}{\bm{v}}
\newcommand*{\dsfrac}[2]{\frac{\displaystyle #1}{\displaystyle #2} }  
\newcommand*{\xvh}{{\hat{\x}}}
\newcommand*{\yvh}{{\hat{\y}}}
\newcommand*{\xh}{{\hat{x}}}
\newcommand*{\yh}{{\hat{y}}}
\newcommand*{\spb}{s}
\newcommand*{\Realpart}[1]{\mathrm{Re}({#1})}
\newcommand*{\stext}{\shortintertext}	
\renewcommand{\S}{\bm{S}}
\newtheorem{Theorem}{Theorem}
\newtheorem{Corollary}{Corollary}
\title{An Extension of the Rayleigh Quotient to the Spectral Radius of Asymmetric Nonnegative Matrices}
\author {Lee Altenberg
\\
Information and Computer Sciences, \\
University of Hawai`i at M\=anoa,  
{altenber@hawaii.edu}}
\begin{document}
\maketitle

\begin{abstract}
The Rayleigh quotient, which provides the classical variational characterization of the spectral radius of Hermitian matrices, can be extended to nonsymmetric nonnegative irreducible matrices, $\A$, by the inclusion of a diagonal similarity scaling, to yield the variational formula $\spr(\A) = \sup_{\x > \0} \inf_{\y > \0} \x\tr \D_\y \A \D_\y^{-1} \x/(\x\tr \x)$, where $\D_\y$ is the diagonal matrix of the vector $\y$.  Comparison is made to other variational formulae for the spectral radius.
\end{abstract}
The classical variational characterization of the spectral radius of Hermitian matrices is Rayleigh's formula.  Let $\A \in \Complex^{n \times n}$ be an $n \times n$ Hermitian matrix, and let $\spr(\A) = \max \set{| \lambda | \suchthat  \det(\A - \lambda \I) = 0}$ be its spectral radius.   Rayleigh's formula is the maximum of the \emph{Rayleigh quotient} (also known as the \emph{Rayleigh-Ritz ratio}):
\an{
\spr(\A) &= \sup_{\x \neq \0} \frac{\x\tr \A \x^*}{\x\tr\x^*} ,
}
where $\x^*$ is the complex conjugate of $\x$.

No such formula exists for general complex square matrices $\A \in \Complex^{n \times n}$.  We note that in Rayleigh's formula, both left and right sides of the matrix $\A$ enter equivalently, which we refer to here as \emph{structural symmetry}.  

Variational formulae do exist for the class of irreducible nonnegative square matrices.  In contrast with the structurally symmetric form of the Rayleigh quotient, the most well-known formulae for the spectral radius of irreducible nonnegative matrices are \emph{structurally asymmetric}, in that the two sides of the matrix enter in different roles.  There is the classical Collatz-Wielandt formula:
\ab{
\spr(\A) &
= \max_{\x > \0} \min_i \frac{[\A \x]_i}{x_i} 
= \min_{\x > \0} \max_i \frac{[\A \x]_i}{x_i},
}
where only one side of the matrix is involved in vector multiplication (although either side could be chosen), and the Donsker-Varadhan-Friedland formula \citep{Donsker:and:Varadhan:1975,Friedland:1981:Convex}:
\an{
\spr(\A) &
= \sup_{\p \in \Pc} \inf_{\x > \0} \sum_{i=1}^n p_i \frac{[\A \x]_i}{x_i}
= \sup_{\p \in \Pc} \inf_{\x > \0} (\p \circ \x^{-1}){[\A \x]_i},
}
where $\x^{-1} \eqdef [1/x_i]_{i=1}^n$, $\p \circ \x^{-1} = [p_i / x_i]_{i=1}^n$, $\circ$ being the elementwise Hadamard product, and the simplex is represented as $\Pc = \set{\p \suchthat p_i \geq 0, \sum_{i=1}^n p_i = 1} \subset \Reals^n$.  Here the asymmetry enters in the different spaces of variation on the left and right vectors multiplying $\A$.  

\citet{Fiedler:Johnson:Markham:and:Neumann:1985} introduced a structurally symmetric variational formula for the spectral radius: 
\ab{
\spr(\A) &= \max_{\z > \0} \ \min_{\x > \0, \ \y > \0, \ \x \circ \y = \z} \dsfrac{\y\tr \A \x}{\y\tr \x} .
}
Here, both sides of the matrix $\A$ are multiplied by vector variables.  Asymmetries in $\A$ manifest as inequality between $\y$ and $\x$ in the vectors at which the $\min$ and $\max$ are attained.

A change of variables shows that the Donsker-Varadhan-Friedland formula and the Fiedler formula are actually equivalent.  The constraint $\x \circ \y = \z$ in the Fiedler formula means that the degrees of freedom in $\set{\x, \y}$ are not actually $2n$, but only $n$, since there are $n$ constraints $y_i = z_i / x_i$.  An additional $n$ degrees of freedom come from variation of $\z$.  The normalization by $\x\tr \y$ brings the total variational degrees of freedom to $2n - 1$.  With the constraints incorporated, the Fiedler formula becomes
\ab{
\spr(\A) &
= \max_{\z > \0} \ \min_{\x > \0} \dsfrac{(\z \circ \x^{-1}) \tr \A \x}{\z\tr \ev}
= \max_{\z > \0} \ \min_{\x > \0} \sum_{i=1}^n \dsfrac{z_i}{\z\tr \ev}\frac{ [\A \x]_i}{x_i},
}
which is the Donsker-Varadhan-Friedland formula since $\z / {\z\tr \ev} \in \Pc$, which  likewise has $2n-1$ variational degrees of freedom.  The $\inf$ and $\sup$ are replaced by $\min$ and $\max$ because, as \citet{Fiedler:Johnson:Markham:and:Neumann:1985} show, the $\max$ and $\min$ are actually attained for specific $\x> \0, \y > \0$.

In this brief note, we shall see that the methods used by \citet{Fiedler:Johnson:Markham:and:Neumann:1985} to prove Fiedler's formula can be used to prove another structurally symmetric variational formula for the spectral radius.   The formula exhibits the most natural way to handle asymmetries in the matrix, which is through a diagonal similarity scaling.  As in the other variational formulae, it also has $2n-1$ degrees of variational freedom.  What is perhaps most remarkable is that the formula is not already widely known.

Let $\y \in \Reals^n$ and define $\D_\y \eqdef \diag{\y}$ to be the diagonal matrix with diagonal elements $y_i$.  A diagonal similarity scaling of a matrix $\A$ is the product $\D_\y \A \D_y^{-1}$ where we assume $\y > \0$. 

\begin{Theorem}[Structurally Symmetric Variational Formula for the Spectral Radius]\label{testLabel}
\ \\
Let $\A \in \Reals^{n \times n}$ be a nonnegative irreducible $n \times n$ matrix, with Perron root $\spr(\A)$, and Perron vectors $\uv\tr\A = \spr(\A)\uv\tr$ and $\A \vv = \spr(\A)\vv$, normalized so that $\uv\tr \vv = 1$.  Let $\x, \y \in \Reals^n$.  Then
\an{\label{eq:Theorem}
\spr(\A) &= \sup_{\x > \0 } \  \inf_{\y > \0} \frac{  \x\tr \D_\y \A \D_\y^{-1} \x}{\x\tr \x}.
}
Moreover, the $\sup$ and $\inf$ are attained at $\xvh\tr \D_\yvh = \uv \tr$ and $\D_\yvh^{-1} \xvh = \vv$, i.e. $\xh_i \yh_i = u_i$ and $\xh_i / \yh_i = v_i$, hence $u_i v_i = \xh_i^2$, which have solutions
\ab{
\xh_i &= \sqrt{u_i v_i} , \\
\yh_i &= u_i / x_i =  u_i / \sqrt{u_i v_i} = \sqrt{u_i/v_i} \\
&= x_i / v_i 
=  \sqrt{u_i v_i}/ v_i = \sqrt{u_i/v_i}, 
} 
for which we find that
\ab{
\frac{  \xvh\tr \D_\yvh \A \D_\yvh^{-1} \xvh}{\xvh\tr \xvh}
&=  \frac{  \uv\tr \A \vv}{\uv\tr \vv}
= {  \uv\tr \A \vv} = \spr(\A) \uv\tr \vv = \spr(\A) .
}
\end{Theorem}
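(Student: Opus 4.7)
The plan is to prove the variational equality by establishing the two one-sided inequalities separately. Writing $R(\x, \y) \eqdef \x\tr \D_\y \A \D_\y^{-1} \x / \x\tr \x$, I want to show $\sup_{\x>\0}\inf_{\y>\0} R(\x, \y) \leq \spr(\A)$ and $\geq \spr(\A)$. A useful preparatory step is the change of variables $a_i \eqdef x_i y_i$ and $b_i \eqdef x_i / y_i$, so that $a_i b_i = x_i^2$; this rewrites the ratio as $\bm{a}\tr \A \bm{b} / \bm{a}\tr \bm{b}$ with $\bm{a} \circ \bm{b} = \x \circ \x$ free to range over all positive vectors as $\x$ varies. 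In this form the formula coincides with Fiedler's under the bijection $(\x, \y) \leftrightarrow (\bm{a}, \bm{b})$, so the theorem could in principle be deduced from Fiedler's formula; nonetheless I will give a direct argument in the spirit of Fiedler et al., since the diagonal-scaling form admits transparent explicit optimizers.

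For the upper bound I would observe that for any $\x > \0$, the choice $y_i = x_i/v_i$ forces $b_i = v_i$, so
$\bm{a}\tr \A \bm{b} = \sum_i a_i [\A\vv]_i = \spr(\A) \sum_i a_i v_i = \spr(\A) \sum_i x_i^2 = \spr(\A)\,\x\tr\x$,
giving $R(\x, \y) = \spr(\A)$ exactly. Hence $\inf_{\y > \0} R(\x, \y) \leq \spr(\A)$ for \emph{every} $\x$; a symmetric choice $y_i = u_i/x_i$ works equally well via the left-eigenvector identity.

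For the lower bound I would specialize to $\xh_i = \sqrt{u_i v_i}$ and show $R(\xvh, \y) \geq \spr(\A)$ for all $\y > \0$. Writing $c_i \eqdef \xh_i y_i$ and $d_i \eqdef \xh_i/y_i$ gives $c_i d_i = u_i v_i$ and reduces the claim to $\bm{c}\tr \A \bm{d} \geq \spr(\A)\,\uv\tr\vv$ whenever $\bm{c} \circ \bm{d} = \uv \circ \vv$. Parametrizing further by $c_i = u_i \alpha_i$ and $d_i = v_i/\alpha_i$ converts this to $\sum_{i,j} M_{ij}\,\alpha_i/\alpha_j \geq \sum_{i,j} M_{ij}$, where $M_{ij} \eqdef u_i A_{ij} v_j$. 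The Perron identities $\A\vv = \spr(\A)\vv$ and $\uv\tr\A = \spr(\A)\uv\tr$ force both row and column sums of $M$ to equal $\spr(\A) u_i v_i$, so $M/(\spr(\A)\uv\tr\vv)$ is a joint distribution with \emph{coincident} row and column marginals. Jensen's inequality applied to the concave logarithm then yields $\sum_{ij} M_{ij} \alpha_i/\alpha_j \geq \sum_{ij} M_{ij}$, with equality when $\alpha_i$ is constant, recovering $\yh_i = \sqrt{u_i/v_i}$.

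The main obstacle is identifying the parametrization that makes the row and column marginals of $M$ coincide; this symmetric-marginals property is precisely what triggers the Jensen (equivalently AM-GM) step, and is arguably the structural reason the formula itself is structurally symmetric. Once this is in hand, the remaining computations---including the attainment calculation $R(\xvh,\yvh) = \spr(\A)$ already displayed in the theorem statement---are direct substitutions.
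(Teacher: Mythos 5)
Your proposal is correct, and its skeleton is the same as the paper's: the same test vector for the upper bound (the paper takes $y_i = u_i/x_i$, you take the mirror-image $y_i = x_i/v_i$; both give $R(\x,\y)=\spr(\A)$ exactly), and the same candidate $\xh_i = \sqrt{u_i v_i}$ for the lower bound. The one genuine difference is in how the lower bound is finished. The paper changes variables to reduce the claim to the inequality $\z\tr\D_{\uv}\A\D_{\vv}\z^{-1} \geq \ev\tr\D_{\uv}\A\D_{\vv}\ev$ and cites it from Fiedler et al.\ and Eaves et al.\ as a known property of line-sum-symmetric matrices. You instead prove that inequality from scratch: your matrix $M_{ij}=u_iA_{ij}v_j$ is exactly $\D_{\uv}\A\D_{\vv}$, your observation that its row and column sums both equal $\spr(\A)u_iv_i$ is its line-sum symmetry, and your Jensen step ($\log\sum P_{ij}\alpha_i/\alpha_j \geq \sum P_{ij}(\log\alpha_i-\log\alpha_j)=0$ because the marginals coincide) is precisely the standard proof of the cited lemma. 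So your argument is self-contained where the paper's is not, at the cost of a page of extra computation; your opening remark that the change of variables $a_i=x_iy_i$, $b_i=x_i/y_i$ identifies the formula with Fiedler's is also essentially the observation the paper makes (in the reverse direction) when relating Fiedler's formula to Donsker--Varadhan--Friedland. No gaps.
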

\begin{proof}
We follow the method of proof of Fiedler's theorem from \citet[pp. 83--86]{Fiedler:Johnson:Markham:and:Neumann:1985}.  First, we show that for any $\x >\0$,  $\inf_{\y > \0} \frac{  \x\tr \D_\y \A \D_\y^{-1} \x}{\x\tr \x} \leq \spr(\A)$.   Given $\x$, let $\y = \uv / \x$.  Then
\ab{
\frac{  \x\tr \D_\y \A \D_\y^{-1} \x}{\x\tr \x}
&= \frac{  \x\tr \D_{\uv/\x} \A \D_{\x/\uv} \x}{\x\tr \x}
= \frac{  \uv\tr \A \D_{\x/\uv} \x}{\x\tr \x}
= \spr(\A) \frac{  \uv\tr  \D_{\x/\uv} \x}{\x\tr \x} \\
&= \spr(\A) \frac{\x\tr \x}{\x\tr\x}
= \spr(\A),
}
hence
\an{
\phi(\x) &\eqdef \inf_{\y > \0} \frac{  \x\tr \D_\y \A \D_\y^{-1} \x}{\x\tr \x} \leq \spr(\A). \label{eq:phiw}
}

Second, we show that $\sup_{\x > \0} \phi(\x) \geq \spr(\A)$.  Consider $\phi(\sqrt{\uv\vv})$:
\ab{
\phi(\sqrt{\uv\vv}) &
= \inf_{\y > \0} \frac{  \sqrt{\uv\vv}\tr \D_\y \A \D_\y^{-1}\sqrt{\uv\vv}}{\sqrt{\uv\vv}\tr \sqrt{\uv\vv}}
= \inf_{\y > \0} \frac{  \sqrt{\uv\vv}\tr \D_\y \A \D_\y^{-1}\sqrt{\uv\vv}}{\uv\tr\vv} \\&
= \inf_{\y > \0}{  \sqrt{\uv\vv}\tr \D_\y \A \D_\y^{-1}\sqrt{\uv\vv}}.
}
Substitute $\z = \y \circ \sqrt{\vv/\uv}$, so $\y = \z \circ \sqrt{\uv/\vv}$.  Then
\ab{
\phi(\sqrt{\uv\vv}) &
= \inf_{\y > \0}{  \sqrt{\uv\vv}\tr \D_\y \A \D_\y^{-1}\sqrt{\uv\vv}}
= \inf_{\z > \0}\sqrt{\uv\vv}\tr \D_{(\z \circ \sqrt{\uv/\vv})} \A \D_{(\z \circ \sqrt{\uv/\vv})}^{-1}\sqrt{\uv\vv} \\&
= \inf_{\z > \0}\z\tr\D_{\uv} \A \D_{\vv}\z^{-1}. 
}
\citet[p. 84]{Fiedler:Johnson:Markham:and:Neumann:1985} and \citet[Corollary 3]{Eaves:Hoffman:Rothblum:and:Schneider:1985} showed that matrices diagonally scaled by their Perron vectors, $\D_{\uv} \A \D_{\vv}$, exhibit for all $\z > \0$,
\an{\label{eq:Fiedler+Eaves}
\z\tr\D_{\uv} \A \D_{\vv}\z^{-1} \geq \ev\tr\D_{\uv} \A \D_{\vv}\ev,
}
hence, $\inf_{\z > \0}\z\tr\D_{\uv} \A \D_{\vv}\z^{-1}$ is attained at $\z = \ev$, which yields
\ab{
\phi(\sqrt{\uv\vv}) &= \inf_{\z > \0}\z\tr\D_{\uv} \A \D_{\vv}\z^{-1} 
= \ev\tr\D_{\uv} \A \D_{\vv}\ev
= \uv\tr \A \vv = \spr(\A).
}
Therefore, 
\an{
\sup_{\x>\0} \phi(\x) &\geq \phi(\sqrt{\uv\vv})  = \spr(\A). \label{eq:supphi}
}
Since by  \eqref{eq:phiw} , $\sup_{\x>\0} \phi(\x) \geq \spr(\A)$ and  by \eqref{eq:supphi} $\phi(\x) \leq \spr(\A)$ for all $\x > \0$, we obtain
\ab{
\sup_{\x>\0} \phi(\x) &=  \sup_{\x > \0 } \  \inf_{\y > \0} \frac{  \x\tr \D_\y \A \D_\y^{-1} \x}{\x\tr \x}
= \spr(\A).
}
\end{proof}
\textbf{Remark:} An interpretation of this result is that the diagonal similarity scaling $\D_{\sqrt{\uv/\vv}} \A \D_{\sqrt{\vv/\uv}}$ renders $\A$ ``as symmetric'' as possible, to the point where it is fit to be used in Rayleigh's quotient.   The vector $\sqrt{\uv/\vv}$ could be thought of as a measure of the asymmetry of $\A$. 

This result is readily extended to the spectral bound $\spb(\A) \eqdef \max \set{ \Realpart{\lambda} \suchthat \det(\A - \lambda \I) = 0}$, of essentially nonnegative matrices, in which off-diagonal elements are nonnegative and diagonal elements are real.
\begin{Corollary}
Let $\A \in \Reals^{n \times n}$ be an irreducible essentially nonnegative $n \times n$ matrix. Let $\x, \y \in \Reals^n$.  Then the spectral bound of $\A$ is
\ab{
\spb(\A) &= \sup_{\x > \0 } \  \inf_{\y > \0} \frac{  \x\tr \D_\y \A \D_\y^{-1} \x}{\x\tr \x}.
}
\end{Corollary}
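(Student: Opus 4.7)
The plan is to reduce the corollary to the theorem by a standard Perron shift. Since $\A$ is irreducible and essentially nonnegative, there exists $c \in \Reals$ with $c \geq -\min_i A_{ii}$ such that $\A + c \I$ is a nonnegative irreducible matrix. I would first invoke Perron--Frobenius for $\A + c\I$: its spectral radius satisfies $\spr(\A + c\I) = \spb(\A) + c$, and its left and right Perron vectors $\uv, \vv$ (normalized so that $\uv\tr\vv = 1$) coincide with the left and right eigenvectors of $\A$ for the eigenvalue $\spb(\A)$, since $(\A + c\I)\vv = \A\vv + c\vv$ and similarly on the left.

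Next, I would apply the theorem to the nonnegative irreducible matrix $\A + c\I$, obtaining
\ab{
\spb(\A) + c = \spr(\A + c\I) = \sup_{\x > \0}\inf_{\y > \0} \frac{\x\tr \D_\y (\A + c\I) \D_\y^{-1} \x}{\x\tr \x}.
}
The critical observation is that diagonal similarity commutes with the identity, $\D_\y \I \D_\y^{-1} = \I$, so
\ab{
\x\tr \D_\y (\A + c\I) \D_\y^{-1} \x = \x\tr \D_\y \A \D_\y^{-1} \x + c\, \x\tr \x.
}
Dividing by $\x\tr \x$ contributes an additive constant $c$ to the Rayleigh-type quotient, independent of both $\x$ and $\y$, so it passes through both $\inf_\y$ and $\sup_\x$ unchanged. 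Subtracting $c$ from both sides then yields the desired formula for $\spb(\A)$.

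I would close by noting that the value of $c$ is immaterial: the shift is used only as a bridge to an irreducible nonnegative matrix on which the theorem directly applies, and the final expression in the corollary makes no reference to $c$. The suprema and infima are attained at the same $\xvh, \yvh$ constructed from $\uv, \vv$ as in the theorem, since the Perron vectors of $\A + c\I$ are those of $\A$. There is no real obstacle here; the only point to check carefully is that the additive constant $c \x\tr\x$ indeed decouples cleanly from the diagonal similarity, which it does because $\I$ is invariant under $\D_\y(\cdot)\D_\y^{-1}$.
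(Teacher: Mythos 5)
Your proof is correct and takes essentially the same route as the paper: the paper sets $t = \min_i A_{ii}$, applies the theorem to $\B = \A - t\I \geq \0$, and uses $\D_\y \I \D_\y^{-1} = \I$ to pass the additive constant through the $\sup$ and $\inf$, exactly as you do with $c = -t$. Your closing remark that the Perron vectors of the shifted matrix coincide with those of $\A$ is a harmless addition the paper omits.
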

\begin{proof}
Let $t = \min \set{A_{ii}}$.  Then $\B = \A - t \I \geq \0$, and $\spr(\B) = \spb(\A) - t$.  Hence
\ab{
\spb(\A) &
= \spr(\B) + t 
= \sup_{\x > \0 } \  \inf_{\y > \0} \frac{  \x\tr \D_\y (\A - t \I) \D_\y^{-1} \x}{\x\tr \x} + t\\ 
&=  \sup_{\x > \0 } \  \inf_{\y > \0}\left( \frac{  \x\tr \D_\y \A \D_\y^{-1} \x}{\x\tr \x} -  t   \frac{  \x\tr \D_\y \I \D_\y^{-1} \x}{\x\tr \x} \right) + t \\ &
=  \sup_{\x > \0 } \  \inf_{\y > \0} \frac{  \x\tr \D_\y \A \D_\y^{-1} \x}{\x\tr \x} -t+ t.
}
\end{proof}

Rayleigh's formula is recovered from \eqref{eq:Theorem} in the case of symmetric nonnegative irreducible matrices, $\S$.  We have
\ab{
\phi(\x) &= \inf_{\y > \0} \frac{  \x\tr \D_\y \S \D_\y^{-1} \x}{\x\tr \x}
= \inf_{\y > \0} \frac{  \y\tr \D_\x \S \D_\x \y^{-1}}{\x\tr \x}
}
Since $\D_\x \S \D_\x $ is symmetric, it is \emph{line-sum symmetric}, i.e. $\D_\x \S \D_\x \ev = (\D_\x \S \D_\x) \tr \ev$.  Hence by \eqref {eq:Fiedler+Eaves}, $\inf_{\y > \0} \ev\tr \D_\y (\D_\x \S \D_\x) \D_\y^{-1} \ev $ is attained for $\y = \ev$.  Thus
\ab{
\phi(\x) &= \inf_{\y > \0} \frac{\y\tr \D_\x \S \D_\x \y^{-1}}{\x\tr \x}
= \frac{\ev \tr \D_\x \S \D_\x \ev}{\x\tr \x}
= \frac{\x \tr \S \x }{\x\tr \x}
\stext{so}
\spr(\S) &
= \sup_{\x > \0} \phi(\x) 
=  \sup_{\x > \0}\frac{\x \tr \S \x }{\x\tr \x}
=  \sup_{\x \neq \0}\frac{\x \tr \S \x }{\x\tr \x},
}
which is Rayleigh's formula, where $\x > \0$ can be weakened to $\x \neq \0$ since by Perron-Frobenius theory the maximum is attained for $\x > \0$.

While the Rayleigh quotient can be used to obtain all of the eigenvalues of Hermitian matrices, as is done in the Courant-Fischer theorem \citep[p. 179]{Horn:and:Johnson:1985}, no such direct use of \eqref{eq:Theorem} for nonsymmetric matrices is possible, because $\inf$ and $\sup$ do not apply to the complex eigenvalues that may exist.

\textbf{Conclusion:}  The formula \eqref{eq:Theorem} in Theorem \ref{testLabel} wraps the matrix $\A$ in a diagonal similarity scaling $\D_\y \A \D_\y^{-1}$, which is able to  accommodate any asymmetries in $\A$, and when the rescaled product is placed into the Rayleigh quotient, its maximization produces the spectral radius.  The formula applies to irreducible nonnegative matrices, but much of Perron-Frobenius theory has been extended beyond this class of matrices.  It remains to be addressed just how widely \eqref{eq:Theorem} remains true for these extended matrix classes, and how it may be applied to resolvent positive operators more generally.

\section*{Acknowledgments}
The author thanks Marcus W. Feldman for support from the Stanford Center for Computational, Evolutionary and Human Genomics and the Morrison Institute for Population and Resources Studies, Stanford University, and the Mathematical Biosciences Institute at The Ohio State University, for its support through National Science Foundation Award \#DMS 0931642.  This work was motivated by pursuit of a conjecture by Joel E. Cohen on Levinger's Theorem.


\end{document}